\newcounter{EQNR}
 \theoremstyle{plain}
 \newtheorem{thm}{Theorem}[section]
 \numberwithin{equation}{section} 
 \theoremstyle{plain}
 \theoremstyle{plain}
 \theoremstyle{definition}
 \newtheorem{defn}[thm]{Definition}
 \theoremstyle{plain}
 \newtheorem{prop}[thm]{Proposition}
 \newtheorem{lem}[thm]{Lemma}
 \newtheorem{cor}[thm]{Corollary}
 \newtheorem*{cor*}{Corollary}
 \newtheorem*{conj*}{Conjecture}
 \newtheorem*{thm*}{Theorem}
\newtheorem{example}{Example}
\newcommand{\bl}{\begin{lem}}
\newcommand{\el}{\end{lem}}
\newcommand{\bml}{\begin{multline}}
\newcommand{\eml}{\end{multline}}
\newcommand{\beq}{\begin{equation}}
\newcommand{\eeq}{\end{equation}}
\newcommand{\bp}{\begin{prop}}
\newcommand{\ep}{\end{prop}}
\newcommand{\bd}{\begin{defn}}
\newcommand{\ed}{\end{defn}}
\newcommand{\pf}{\begin{proof}}
\newcommand{\epf}{\end{proof}}
\newcommand{\field}[1]{\ensuremath{\mathbb{#1}}}
\newcommand{\CC}{\field{C}}
\newcommand{\NN}{\field{N}}
\newcommand{\HH}{\field{H}}
\newcommand{\RR}{\field{R}}
\newcommand{\ZZ}{\field{Z}}
\let\Im\relax
\DeclareMathOperator{\Im}{Im}
\let\Re\relax
\DeclareMathOperator{\Re}{Re}
\DeclareMathOperator{\PSL}{PSL}
\DeclareMathOperator{\SL}{SL}
\DeclareMathOperator{\vol}{vol}
\DeclareMathOperator{\tr}{tr}
\newcommand{\csp}{\textbf{c}}
\newcommand{\elp}{\textbf{e}}
\newcommand{\mm}{\mathfrak{a}}
\newcommand{\Z}{\mathcal{Z}}
\begin{document}

\title{An evaluation of the central value of the automorphic scattering determinant}
\author{Joshua S. Friedman\footnote{The views expressed in this article are the author's own and not those of the U.S. Merchant Marine Academy,
the Maritime Administration, the Department of Transportation, or the United States government.}, Jay Jorgenson\footnote{Research supported
by NSF and PSC-CUNY grants.} and Lejla Smajlovi\'{c}}

\maketitle

\begin{abstract}\noindent
Let $M$ be a finite volume, non-compact hyperbolic Riemann surface, possibly with elliptic fixed points, and let $\phi(s)$ denote the automorphic scattering determinant.
From the known functional equation $\phi(s)\phi(1-s)=1$ one concludes that $\phi(1/2)^{2} = 1$.  However, except for the relatively
few instances when $\phi(s)$ is explicitly computable, one does not know $\phi(1/2)$.  In this article we address this
problem and prove the following result.  Let $N$ and $P$ denote the number of zeros and poles, respectively, of $\phi(s)$
in $(1/2,\infty)$, counted with multiplicities.  Let $d(1)$ be the coefficient of the leading term from the Dirichlet series component
of $\phi(s)$.  Then $\phi(1/2)=(-1)^{N+P} \cdot \mathrm{sgn}(d(1))$.
\end{abstract}

\section{Introduction}
Various problems and conjectures in number theory can be equated to questions in harmonic analysis.  For instance,
the Riemann hypothesis for the Riemann zeta function is equivalent to the determination of the zeros of the
meromorphic continuation of the (classical) non-parabolic Eisenstein series $E(s,z)$ associated to $\PSL(2,\ZZ)$.  The Lindel\"of hypothesis
for the Riemann zeta function is implied by a sup-norm bound for $E(1/2+ir,z)$ when $r \in \RR$.  As a result, harmonic
analysis associated to quotients of the hyperbolic upper half plane $\HH$ has received considerable interest and study.

Nonetheless, there are many basic questions which remain unsolved.  The far-reaching Phillips-Sarnak philosophy \cite{PS92} asserts that only
in presence of arithmetic or geometric symmetry will there exist square-integrable eigenfunctions of the Laplacian.  Though there
is compelling results supporting this point of view, the main conjectures remain unanswered.  Within this framework, there
are several other questions associated to the harmonic analysis of finite volume quotients of $\HH$ which are seemingly approachable
yet their solutions remain elusive.  The purpose of this paper is to address one such question.

Let $\Gamma$ be a Fuchsian group of the first kind with $\csp$ cusps, and assume $\csp > 0$.  Let $M=\Gamma \backslash \HH$ be the finite volume,
non-compact orbifold quotient space.  Let $\phi(s)$ denote the automorphic scattering determinant, meaning the determinant of the hyperbolic
scattering matrix $\Phi(s)$ which is obtained by computing the various Fourier expansions of the non-holomorphic, parabolic Eisenstein
series associated to $M$.  The function $\phi(s)$ is meromorphic of order at most two.
Furthermore, $\phi(s)$ is holomorphic for $\Re(s) > \frac{1}{2}$, except for a finite number of poles, and it satisfies the functional equation
\begin{equation}\label{functeq phi}
\phi(s)\phi(1-s)=1.
\end{equation}
\noindent
The functional equation implies that $\phi(1/2)^{2} = 1$.  However, except for certain arithmetic groups, one
does not know if $\phi(1/2)$ is one or minus one.  The main result of this article is to evaluate $\phi(1/2)$
in terms of more elementary spectral and group theoretic data associated to $M$.

Let us recall the notation needed to state the main theorem.
For $\Re(s)>1$ the automorphic scattering determinant $\phi(s)$ can be written as an absolutely convergent
generalized Dirichlet series and Gamma functions.  Specifically, for $\Re(s)> 1$ we have that
\begin{equation} \label{phiDirich}
\phi (s)=\pi ^{\frac{\csp}{2}}\left( \frac{\Gamma \left( s-\frac{1}{2}
\right) }{\Gamma \left( s\right) }\right) ^{\csp}\overset{\infty }{\underset
{n=1}{\sum }}\frac{d(n) }{g_{n}^{2s}}
\end{equation}%
where $0< g_{1} < g_{2}< ...$ and $d(n) \in \mathbb{R}$ with $d(1)\neq 0$.

\begin{thm} \label{main thm}
Let $N$ and $P$ denote the number of zeros and poles, respectively, of $\phi(s)$
in $[1/2,\infty)$, counted with multiplicities.  Let $d(1)$ be as in \eqref{phiDirich}. Then
\begin{equation*}\label{phi at 1/2 final formula}
\phi(1/2)=(-1)^{N+P}\mathrm{sgn}(d(1)),
\end{equation*}
where $\mathrm{sgn}$ denotes the sign of a real number.
\end{thm}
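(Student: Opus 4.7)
The plan is to treat $\phi$ restricted to the real interval $(1/2,\infty)$ as a real meromorphic function, determine its sign for $s$ large and positive, and then track the sign changes as $s$ descends to $1/2$. From \eqref{phiDirich} the coefficients $d(n)$ and the $g_n$ are real, and the gamma ratio $\Gamma(s-1/2)/\Gamma(s)$ is positive for real $s>1/2$, so $\phi(s)\in\RR$ for real $s>1$; the Schwarz reflection identity $\overline{\phi(\bar s)}=\phi(s)$ then extends this to all regular real $s>1/2$ by meromorphic continuation. Since $0<g_1<g_2<\cdots$, absolute convergence of the Dirichlet series yields
\begin{equation*}
\phi(s)\sim \pi^{\csp/2}\left(\frac{\Gamma(s-1/2)}{\Gamma(s)}\right)^{\csp}\frac{d(1)}{g_1^{2s}}\qquad(s\to+\infty),
\end{equation*}
which has sign $\mathrm{sgn}(d(1))$; in particular $\phi$ is nonzero on $[T,\infty)$ for $T$ sufficiently large.

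Next, the functional equation $\phi(s)\phi(1-s)=1$ rules out any zero or pole at $s=1/2$, since a zero of order $k>0$ there would force, through the same identity, a pole of order $k$ at $1/2$. Hence $\phi(1/2)\in\{+1,-1\}$, and $\phi$ is nonvanishing and regular on a real neighborhood of $1/2$. Combined with the finiteness of poles of $\phi$ in $\{\Re(s)>1/2\}$ (stated in the introduction) and with the asymptotic above, one concludes that the real zeros and real poles of $\phi$ in $[1/2,\infty)$ are finite in number. Enumerate them in $(1/2,\infty)$ as $s_1<s_2<\cdots<s_m$ with multiplicities $k_1,\ldots,k_m$. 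Near each $s_j$ the local expansion $\phi(s)=c_j(s-s_j)^{\pm k_j}(1+o(1))$ with $c_j\in\RR^{\times}$ (and sign of exponent depending on whether $s_j$ is a zero or a pole) shows that crossing $s_j$ from right to left multiplies $\mathrm{sgn}(\phi(s))$ by $(-1)^{k_j}$ in either case, since $(-1)^{-k_j}=(-1)^{k_j}$.

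Accumulating these sign flips and using continuity of $\phi$ at the endpoint $1/2$ yields
\begin{equation*}
\phi(1/2) = \mathrm{sgn}(d(1))\prod_{j=1}^m(-1)^{k_j} = (-1)^{N+P}\mathrm{sgn}(d(1)),
\end{equation*}
which is the claim. The principal obstacle I anticipate is the rigorous verification that the set of real zeros of $\phi$ in $[1/2,\infty)$ is finite. Although plausible from meromorphicity and the asymptotic behavior at infinity, a careful argument using the finite order of $\phi$ together with its regularity at both endpoints of $[1/2,\infty)$ is required; the corresponding statement for poles is already in the literature.
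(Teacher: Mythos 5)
Your argument is correct, and it takes a genuinely different --- and considerably more elementary --- route than the paper. You work entirely on the real ray: reality of $\phi$ on $\RR$ (by Schwarz reflection from the real Dirichlet data in \eqref{phiDirich}), the leading asymptotic $\phi(s)\sim \pi^{\csp/2}\bigl(\Gamma(s-\tfrac12)/\Gamma(s)\bigr)^{\csp}d(1)g_1^{-2s}$ giving sign $\mathrm{sgn}(d(1))$ at $+\infty$, a parity factor $(-1)^{k_j}$ across each real zero or pole, regularity and nonvanishing at $s=1/2$ from the functional equation, and finally $\phi(1/2)^2=1$ to convert the sign into the value. The paper instead routes the computation through superzeta regularized products: it writes $\phi=Z_-/Z_+$ for two completed Selberg-type zeta functions, applies the Voros-type continuation (Proposition~\ref{prop: Voros cont.} and Theorem~\ref{thm:cont}) to obtain $\phi(z)=\pi^{\csp/2}e^{c_1z+c_2}\exp\bigl(-\tfrac{d}{ds}(\Z_-(s,z)-\Z_+(s,z))|_{s=0}\bigr)$, isolates the finitely many divisor terms that become negative or singular at $z=1/2$, and lands on $\phi(1/2)=(-1)^{N+P}\pi^{\csp/2}\tfrac{d(1)}{g_1}e^{\alpha}$ with $\alpha\in\RR$, closing with the same appeal to $\phi(1/2)^2=1$. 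The heavier machinery buys the Corollary --- the explicit evaluation $e^{\alpha}=g_1/(\pi^{\csp/2}|d(1)|)$ of the regularized product --- which your argument does not produce. One remark on the point you flag as the principal obstacle: finiteness of the real zeros of $\phi$ in $[1/2,\infty)$ needs no finite-order input, since $\phi$ is meromorphic and not identically zero, so its zeros are isolated and hence finite in number on the compact interval $[1/2,T]$, while your asymptotic excludes zeros on $[T,\infty)$; moreover the paper's description of the divisor of $\phi$ in Section~\ref{ascatmatrix} (following Hejhal) already records that the real zeros $\rho_i>1/2$ are finitely many, so you may simply cite that.
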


We give examples of arithmetic groups where $\phi(1/2) =1$ and where $\phi(1/2) = -1$.  However, it remains to
be seen if $\phi(1/2)$ is constant on any given moduli space, and, if not, then how the
values of $\phi(1/2)$ partition the moduli space.  We leave this problem
to the interested reader.

\section{Preliminary material}

\subsection{Basic notation}
Let $\Gamma\subseteq\mathrm{PSL}(2, \mathbb{R})$ be a Fuchsian group of the first kind acting by fractional
linear transformations on the upper
half-plane $\mathbb{H}:=\{z\in\mathbb{C}\,|\,z=x+iy\,,\,y>0\}$. Let $M$ be the quotient
space $\Gamma\backslash\mathbb{H}$ and $g$ the genus of $M$. Denote by $\csp$ number of inequivalent cusps
of $M$ and by $\{R\}_{\Gamma}$ the set of inequivalent elliptic classes of elements of $\Gamma$. For a
fixed elliptic representative $R$, we denote by $d_R$ the order of element $R$ and by $\elp$ the cardinality of
the finite set $\{R\}_{\Gamma}$  of inequivalent elliptic classes in $\Gamma$.

Recall that the hyperbolic volume $\mathrm{vol}(M)$ of $M$ is given by the Gauss-Bonnet formula
\begin{align*}
\mathrm{vol}(M)=2\pi\bigg(2g-2+\csp + \sum_{\{R\}_{\Gamma}} \left( 1- \frac{1}{d_R}\right)\bigg).
\end{align*}

\vskip .10in
Given a meromorphic function $f(s)$, we define the \emph{null set} $N(f)$ to be $N(f) = \{ s \in \mathbb{C}~|~f(s)=0\}$ counted with multiplicity. Similarly, $P(f)$
denotes the \emph{polar set,} the set of points where $f$ has a pole.

\subsection{The Gamma function} \label{secGamma}
Let $\Gamma(s)$ denote the Gamma function. Its poles are all simple and located at each point of $-\NN,$ where $-\NN = \{ 0,-1,-2,\dots \}$.
For $|\arg{s}| \leq \pi-\delta$ and $\delta > 0$, the asymptotic expansion \cite[p. 20]{AAR99} of $\log{\Gamma(s)}$ is given by
\beq \label{gammaExpan}
\log{\Gamma(s)} = \frac{1}{2}\log{2\pi} + \left(s-\frac{1}{2}\right)\log{s} - s + \sum_{j=1}^{m-1} \frac{B_{2j}}{(2j-1)2j}\frac{1}{s^{2j-1}} + g_{m}(s).
\eeq
Here $B_i$ are the Bernoulli numbers.  Also, for each $m$, $g_{m}(s)$ is a holomorphic function in the right half plane $\Re(s)>0$ such that
$g_{m}^{(j)}(s) = O(s^{-2m+1-j})$ as $\Re(s)\to \infty$ for all integers $j\geq 0$, and where the implied constant depends on $j$ and $m$.

\subsection{The double Gamma function} \label{secBarnes}
The Barnes double Gamma function is an entire order two function defined by
$$
G\left(s+1\right)=\left(2\pi\right)^{s/2}\exp\left[-\frac{1}{2}\left[\left(1+\gamma\right)s^{2}+s\right]\right]\prod_{n=1}^{\infty}\left(1+\dfrac{s}{n}\right)^{n}\exp\left[-s+\frac{s^{2}}{2n}\right],
$$
where $\gamma$ is the Euler constant. Therefore, $G(s+1)$ has a zero of multiplicity $n,$ at each point $-n \in \{-1,-2,\dots \}.$
For $\Re(s)>0$ and as $s \rightarrow \infty,$ the asymptotic expansion of $\log G(s+1)$ is given in \cite{FL01} or \cite[Lemma 5.1]{AD14} by
\begin{equation} \label{asmBarnes}
\log G(s+1) = \frac{s^2}{2}\left( \log{s} - \frac{3}{2}\right) - \frac{\log{s}}{12} - s \, \zeta^{\prime}(0) + \zeta^{\prime}(-1) \>- \\
\sum_{k=1}^{n} \frac{B_{2k+2}}{4\,k\,(k+1)\,s^{2k}} +  h_{n+1}(s).
\end{equation}
Here, $\zeta(s)$ is the Riemann zeta-function and
$$
h_{n+1}(s)= \frac{(-1)^{n+1}}{s^{2n+2}}\int_{0}^{\infty}\frac{t}{\exp(2\pi t) -1} \, \int_{0}^{t^2}\frac{y^{n+1}}{y+s^2} \,dy \,dt.
$$
By a close inspection of the proof of \cite[Lemma 5.1]{AD14}, it follows that $h_{n+1}(s)$ is holomorphic function in the right half plane $\Re(s)>0$
which satisfies the asymptotic relation $h_{n+1}^{(j)}(s) = O(s^{-2n-2-j})$ as $\Re(s)\to \infty$ for all integers $j\geq 0$,
and where the implied constant depends upon $j$ and $n$.

Using the Gamma and Barnes double Gamma function, we can construct a function whose divisor coincides with the trivial zeros and
poles of the Selberg zeta function (see Equation~\eqref{trivZeros}), which we will define later.  Specifically, we are interested
in defining a meromorphic function whose poles are at the negative integers $-n \in -\NN$ with multiplicity
$$m_n= (2n+1)(2g-2+\csp) +2n \elp -  2\sum_{\{R\}_{\Gamma}} \lfloor \frac{n}{d_R}\rfloor,
$$
where $\lfloor x\rfloor$ denotes the integer part of a real number $x$.
To do so, first observe that the function  $$\prod_{m=0}^{d_R - 1}G\left(\frac{s+m}{d_R} + 1\right)$$
has zeros at $-n \in -\NN$ of order $\lfloor \frac{n}{d_R}\rfloor.$ Hence, we define
\begin{equation*} \label{eqBarnesE}
G_E(s) =   \prod_{\{R\}_{\Gamma}} \prod_{m=0}^{d_R - 1}G\left(\frac{s+m}{d_R} + 1\right),
\end{equation*}
and set
\beq \label{def G_1}
G_1(s)= \left(\frac{(2\pi)^{-s} (G(s+1)^2)}{\Gamma(s)}\right)^{2g-2+\csp}\cdot \left( (2\pi)^{-s} (G(s+1))^2 \right)^{\elp} \cdot \left( G_E(s)) \right)^{-2}.
\eeq

It is elementary to show that $G_{1}(s)$ is an entire function of order two with zeros at points
$-n \in -\NN$ and corresponding multiplicities  $m_n.$

\section{Zeta functions}

We are further establishing notation by citing material from the well-known
sources \cite{Hejhal83}, \cite{Iwa02} and \cite{Venkov83}.

\subsection{Automorphic scattering determinant} \label{ascatmatrix}

We will rewrite \eqref{phiDirich} in a slightly different form. Let $c_{1}=-2\log{g_{1}} \neq 0,$  $c_{2}=\log d(1),$  and let $u_{n}=g_{n}/g_{1}>1$.
Then for $\text{Re}(s) > 1$ we can write $\phi( s) =L(s)H(s)$ where
\begin{equation} \label{eqPhiA}
L(s) =\pi^{\frac{\csp}{2}}\left( \frac{\Gamma \left( s-
\frac{1}{2}\right) }{\Gamma \left( s\right) }\right) ^{\csp} e^{c_{1}s+c_{2}}
\end{equation}
and
\begin{equation} \label{eqPhiB}
H(s) =1+\overset{\infty }{\underset{n=2}{\sum }}\frac{
a\left( n\right) }{u_{n}^{2s}},
\end{equation}
where  $a(n) \in \mathbb{R}.$ The series \eqref{eqPhiB} converges absolutely for $\Re(s)>1$.
From the generalized Dirichlet series representation \eqref{eqPhiB} of $H(s)$, it follows that
\beq \label{asmPhi}
\frac{d^{k}}{ds^{k}}\log{H(s)} = O(\beta_k^{-\Re(s)}) \quad \textrm{\rm when} \quad \Re(s) \to +\infty,
\eeq
for some $\beta_k > 1$  where the implied constant depends on $k \in \NN.$

The divisor of $\phi(s)$ consists of the following sets of points:

\begin{enumerate}
\item Finitely many real zeros of the form $1-\sigma_i \in [0,1/2)$ for $i=1\dots T$, each with multiplicity $q(\sigma_i)$;
\item Finitely many real zeros of the form $\rho_i > 1/2$, $i=1\dots N,$ where $N$ is defined to be the sum of the multiplicities;
\item Finitely many real poles of the form $1-\rho_i < 1/2$, where  $i=1\dots N;$
\item Finitely many poles $\sigma_i \in (1/2,1]$, where by the functional equation \eqref{functeq phi} each pole has multiplicity $q(\sigma_i)$ and whose sum is $P$;
\item Poles of the form $1-\rho$ and $1-\overline{\rho}$ with $\Re(\rho) > 1/2$ and $\Im(\rho) > 0;$
\item Zeros of the form $\rho$ and $\overline{\rho}$ with $\Re(\rho) > 1/2$ and $\Im(\rho) > 0$.
\end{enumerate}


Let $\lambda_i$ be an eigenvalue for the positive, self-adjoint extension $\Delta$ of the hyperbolic Laplacian.
Denote by $A(\lambda_i)$ the $\Delta-$eigenspace corresponding to the eigenvalue $\lambda_i.$
Set $A_{1}(\lambda_{i})$ to be the subspace of $A(\lambda_i)$ that is spanned by the incomplete theta series.
For each pole $\sigma_{i} \in (1/2,1]$, $i=1,...,T$  the space $A_{1}(\sigma_{i}(1-\sigma_{i}))$   is non-trivial. In fact
from (\cite[Eq. 3.33 on p.299]{Hejhal83}) we have that
\begin{equation*}
q(\sigma_i)=\text{[The multiplicity of the pole of $\phi(s)$ at $s = \sigma_{i}$] } \leq \dim A_{1}(\sigma_{i}(1-\sigma_{i})) \leq \csp.\label{eqBndPole}
\end{equation*}
The eigenvalue $\lambda_i = \sigma_{i}(1-\sigma_{i})$ is called a \emph{residual eigenvalue.}

\subsection{Selberg zeta-function} \label{szeta}
The Selberg zeta function associated to the quotient space $M=\Gamma\backslash\mathbb{H}$ is defined for $\Re(s)>1$ by
the absolutely convergent Euler product
\begin{equation*}
Z(s)=\prod\limits_{\left\{ P_0\right\} \in P(\Gamma
)}\prod_{n=0}^{\infty }\left( 1-N(P_0)^{-(s+n)}\right) \text{,}
\end{equation*}
where $P(\Gamma )$ denotes the set of all primitive hyperbolic conjugacy classes in $\Gamma,$ and $N(P_0)$ denotes the norm of $P_0 \in \Gamma.$
From the product representation given above, we have for $\Re(s)>1$ that

\begin{equation*} \label{log z(s)}
\log{Z(s)} = \sum_{\left\{ P_0\right\} \in P(\Gamma
)} \sum_{n=0}^\infty \left(-\sum_{l=1}^\infty \frac{N(P_0)^{-(s+n)l}}{l}  \right) = -
\sum_{P\in H(\Gamma )}\frac{\Lambda (P)}{N(P)^{s}\log N(P)},
\end{equation*}
where $H(\Gamma )$ denotes the set of all hyperbolic conjugacy classes in $\Gamma,$ and $\Lambda (P)=\frac{\log N(P_{0})}{1-N(P)^{-1}}$, for the
 primitive element $P_{0}$ in the conjugacy class containing $P$.

Let $P_{00}$ be the primitive hyperbolic conjugacy class in all of $P(\Gamma )$ with the smallest norm. Setting $\alpha = N(P_{00})^{\tfrac{1}{2}}$, for $\Re(s) > 2$ and $k \in \NN$ we have the following asymptotic formula
\beq \label{eqSelZetaBound}
\frac{d^{k}}{ds^{k}}\log{Z(s)} = O(\alpha^{-\Re(s)}) \quad \textrm{\rm when} \quad \Re(s) \to +\infty,
\eeq
with an implied constant which depends on $k \in \NN.$

If $\lambda_j$ is an eigenvalue in the discrete spectrum of $\Delta$, let  $m(\lambda_j)$ denote its multiplicty.
We now state the divisor of the $Z(s)$  (see \cite[p. 49]{Venkov90}  \cite[p. 499]{Hejhal83}):

\begin{enumerate}
\item Zeros at the points  $s_j$ on the line $\Re(s)=\tfrac{1}{2}$ symmetric relative to the real axis and in  $(1/2,1]$, where each zero $s_j$ has multiplicity $m(s_j) = m(\lambda_j)$ where $s_j(1-s_j) = \lambda_j$ is an eigenvalue in the discrete spectrum of $\Delta$  \label{szeta1};

\item Zeros at the points $s_{j} \in [0,1/2)$ where $s_j(1-s_j) = \lambda_j \in[0,1/4)$ is an eigenvalue in the discrete spectrum of $\Delta$ and
the multiplicity $\widetilde{m}(s_{j})$ is given by $\widetilde{m}(s_{j}) =  m(\lambda_j) - q(1-s_j) \geq 0$; we denote by $K$ the number of eigenvalues $\lambda_j \in[0,1/4)$ and put $m_j=m(\lambda_j)$, $j=1,...K $.

Note that, in the case when $\lambda_j$ is not the residual eigenvalue, we take $q(1-s_j)= 0$, i.e. $\widetilde{m}(s_{j}) =  m(\lambda_j)$.

\item The point $s=\tfrac{1}{2}$ can be a zero or a pole, and the order of the point as a divisor is
$$\mm = 2d_{1/4}- \tfrac{1}{2}\left( \csp- \tr \Phi (\tfrac{1}{2})\right)$$
where $d_{1/4}$ be the multiplicity  of the possible eigenvalue $\lambda = \tfrac{1}{4}$
of $\Delta$;
\item Poles at $s=-n-\tfrac{1}{2}, $ where $n=0,1,2,\dots,$  each with
multiplicity $\csp$;
\item Finitely many real zeros $1-\rho_i < 1/2$, where  $i=1\dots N;$
\item Zeros at each $s = 1-\rho, 1-\overline{\rho}$ where $\rho$ is a zero of $\phi(s)$ with $\Re(\rho) > \tfrac{1}{2}$ and $\Im(\rho)>0;$ \
 \item Zeros at points $s=-n \in -\NN$,  with multiplicities
    $$
   m_n= \frac{\vol(M)}{2\pi }(2n+1)- \sum_{\{R\}_{\Gamma}}\frac{1}{d_R}\sum_{k=1}^{d_R-1} \frac{\sin\left(\frac{k\pi (2n+1)}{d_R}\right)}{\sin\left(\frac{k\pi}{d_R}\right)}.$$
\end{enumerate}

The last set of zeros are called \emph{trivial} zeros.  It is possible to write the multiplicity $m_n$ in a different way.
By using a double induction in the variables $n$ and $d_R$, one can show that
$$
\sum_{\{R\}_{\Gamma}}\frac{1}{d_R} \left( 2n+1 + \sum_{k=1}^{d_R-1} \frac{\sin\left(\frac{k\pi (2n+1)}{d_R}\right)}{\sin\left(\frac{k\pi}{d_R}\right)} \right) = \sum_{\{R\}_{\Gamma}} \left(2\lfloor \frac{n}{d_R}\rfloor +1\right).
$$
Therefore, by applying the Gauss-Bonnet formula for $\mathrm{vol}(M)$ we immediately get
\beq \label{trivZeros}
m_n= (2n+1)(2g-2+\csp) +2n \elp -  2\sum_{\{R\}_{\Gamma}} \lfloor \frac{n}{d_R}\rfloor .
\eeq

\subsection{Complete zeta functions}  \label{complete zetas}

Define
$$
Z_+(s)=\frac{Z(s)}{G_1(s)(\Gamma(s-1/2))^{\csp}},
$$
where $G_1(s)$ is defined by \eqref{def G_1}.
Note that we have canceled out the trivial zeros and poles of $Z(s)$.
Hence the zero set $N(Z_+)$ of $Z_{+}$consists of the following points:
\begin{enumerate}
\item At $s = \tfrac{1}{2}$ with multiplicity $\mm$ where
$$
\mm = 2d_{1/4}+\csp-\tfrac{1}{2}\left( \csp-\tr \Phi (\tfrac{1}{2})\right) = 2d_{1/4}+\tfrac{1}{2} \left( \csp +\tr \Phi (\tfrac{1}{2})\right) \geq 0;
$$
\item At the points $s_{j} \in [0,1/2)$ where $s_j(1-s_j) = \lambda_j$ is an eigenvalue in the discrete spectrum of $\Delta$
each with multiplicity $m(\lambda_j) - q(1-s_j) \geq 0;$
\item At the points $s_j$ on the line $\Re(s)=\tfrac{1}{2}$ symmetric relative to the real axis and in  $(1/2,1]$  where
each zero $s_j$ has multiplicity $m(s_j) = m(\lambda_j)$ where $s_j(1-s_j) = \lambda_j$ is an eigenvalue in the discrete spectrum of $\Delta$;
\item At each point $s = 1-\rho,1-\overline{\rho}$ where $\rho$ is a zero of $\phi(s)$ with $\Re(\rho) > \tfrac{1}{2},$ and $\Im(\rho) > 0.$
\end{enumerate}

Sets one, two and three in the above enumeration are finite and are such that all zeros are real.  Within set four, there are a finite number of real zeros.
Hence, in total there are a finite number of real zeros of $Z_{+}$, and the location of the zeros are described in the above four sets.

Define $Z_-(s)= Z_+(s)\phi(s)$.  It follows that $N(Z_-) = 1-N(Z_+)$. In other words,
$s$ is a zero of $Z_+$ if and only if $1-s$ is a zero, necessarily with the same multiplicity, of $Z_{-}$.

\section{Superzeta functions}

\subsection{Regularized products using superzeta functions}
Let $\RR^{-} = (-\infty,0]$ be the non-positive real numbers. Let $\{y_{k}\}_{k\in \mathbb{N}}$ be the sequence of zeros
of an entire function $f$ of order at most two, repeated with their multiplicities. Let
$$X_f = \{z \in \CC~|~ (z-y_{k}) \notin \RR^{-}~\text{for all} ~ y_{k} \}. $$
For $z \in X_f,$ and $s \in \CC$ consider the series
\begin{equation}
\Z_{f}(s,z)=\sum_{k=1}^{\infty }(z-y_{k})^{-s},  \label{Zeta1}
\end{equation}
where the complex exponent is defined using the principal branch of the logarithm with $\arg z\in
\left( -\pi ,\pi \right) $ in the cut plane $\CC \setminus \RR^{-}$.
Since $f$ is of order at most two, the series $\Z_{f}(s,z)$ converges absolutely for $\Re(s) > 2.$
Following \cite{Voros1}, the series $\Z_{f}(s,z)$ is called the \it superzeta function \rm
associated to the zeros of $f $, or the simply the \emph{superzeta} function of $f.$

If $\Z_{f}(s,z)$ has a meromorphic continuation which is regular at $s=0,$ we define the \emph{superzeta regularized product} associated to $f$ as
$$
D_{f}\left( z \right) = \exp\left( {-\frac{d}{ds}\left. \Z_{f}\left( s,z\right) \right|_{s=0}  } \right).
$$

Hadamard's product formula allows us to write
\beq
f(z) = \Delta_{f}(z) = e^{g(z)} z^r \prod_{k=1}^\infty \left( \left(1-\frac{z}{y_k}
\right)\exp\left[ \frac{z}{y_k} + \frac{z^2}{2{y_k}^2}   \right]    \right),
\eeq
where $g(z)$ is a polynomial of degree 2 or less, $r\geq 0$ is the order of eventual zero of $f$ at $z=0,$ and the other zeros $y_k$ are listed with multiplicity.

The following proposition, originally due to Voros ( \cite{Voros1}, \cite{Voros3}, \cite{VorosKnjiga}) is proven in \cite[Prop. 4.1]{FJS16}:

\begin{prop} \label{prop: Voros cont.}
Let $f$ be an entire function of order two, and for $k\in\NN,$ let $y_k$ be the sequence of zeros of $f.$ Let $\Delta_{f}(z)$
denote the Hadamard product representation of $f.$ Assume that for $n>2$ we have the following asymptotic expansion:
\begin{equation} \label{defAE}
\log \Delta_{f}(z)= \widetilde{a}_{2}z^{2}(\log z-\frac{3}{2}%
)+b_{2}z^{2}+\widetilde{a}_{1}z\left( \log z-1\right) +b_{1}z+\widetilde{a}%
_{0}\log z+b_{0}+\sum_{k=1}^{n-1}a_{k}z^{\mu _{k}} + h_n(z),
\end{equation}
where $1>\mu _{1}>...>\mu _{n} \rightarrow -\infty $, and $h_n(z)$ is a sequence of holomorphic functions in the sector $\left\vert \arg z\right\vert <\theta <\pi, \quad (\theta >0)$ such that $h_n^{(j)}(z)=O(|z|^{\mu_n-j})$, as $\left\vert z\right\vert \rightarrow \infty $ in the above sector, for all integers $j \geq 0.$

Then, for all $z\in X_f,$ the superzeta function $\Z_{f}(s,z)$  has a meromorphic continuation to the half-plane $\Re(s)<2$ which is regular at $s=0.$

Furthermore, the superzeta regularized product $D_{f}\left( z\right) $ associated to $f(s)$ is related to $\Delta_{f}(z)$ through the formula
\begin{equation}
\exp\left( {-\frac{d}{ds}\left. \Z_{f}\left( s,z\right) \right|_{s=0}  } \right) = D_{f}(z)=e^{-(b_{2}z^{2}+b_{1}z+b_{0})}\Delta_{f}(z).  \label{D(z)}
\end{equation}
\end{prop}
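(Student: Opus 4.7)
My plan is to represent $\Z_f(s,z)$ as a contour integral for $\Re(s)>2$, meromorphically continue in $s$ using the asymptotic expansion \eqref{defAE}, and then read off $\partial_s\Z_f(s,z)|_{s=0}$. Concretely, for $\Re(s)>2$ and $z\in X_f$ I would start from
\[
\Z_f(s,z)=\frac{1}{2\pi i}\oint_{\mathcal{C}}(z-w)^{-s}\,\frac{d}{dw}\log\Delta_f(w)\,dw,
\]
where $\mathcal{C}$ is a positively oriented contour encircling all the $y_k$'s but not the possible zero of $\Delta_f$ at $w=0$, and $(z-w)^{-s}$ uses the principal branch. A residue calculation reproduces $\sum_k(z-y_k)^{-s}$, since $(\log\Delta_f)'$ has a simple pole at each $y_k$ whose residue equals the multiplicity; absolute convergence for $\Re(s)>2$ follows from $f$ having order two.

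Next, I would deform $\mathcal{C}$ into a Hankel-type contour running out to infinity inside the sector $|\arg w|<\theta$ where \eqref{defAE} is valid. On the asymptotic tail the differentiated expansion supplies $(\log\Delta_f)'(w)$ term by term. Each explicit summand $w^j$, $w^j\log w$, or $w^{\mu_k}$ produces, upon integration against $(z-w)^{-s}$, a Beta/Gamma function expression that is meromorphic in $s$ with poles only at $s=1,2$ and regular at $s=0$. The remainder integral against $h_n'(w)=O(|w|^{\mu_n-1})$ is holomorphic for $\Re(s)>\mu_n$; letting $n\to\infty$ yields the full meromorphic continuation to $\Re(s)<2$, regular at $s=0$.

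To identify $D_f(z)$, I would differentiate the explicit contributions in $s$ and set $s=0$. The decisive bookkeeping is that a pure monomial term $b_jw^j$ in \eqref{defAE} (with no $\log w$) contributes through a Mellin-type prefactor that vanishes at $s=0$ but has derivative $b_jz^j$ at $s=0$; the transcendental terms $\widetilde{a}_jw^j(\log w-c_j)$ and the further monomials $a_kw^{\mu_k}$ reproduce themselves in $-\partial_s\Z_f|_{s=0}$; and the contribution of $h_n$ reproduces the tail. Assembling the terms yields $-\partial_s\Z_f(s,z)|_{s=0}=\log\Delta_f(z)-(b_2z^2+b_1z+b_0)$, which is \eqref{D(z)}.

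The main obstacle is this last bookkeeping step: tracking that exactly the three polynomial coefficients $b_2,b_1,b_0$ (and no others) end up in the subtractive correction $e^{-(b_2z^2+b_1z+b_0)}$. The sectorial contour deformation and the differentiation-under-the-integral step for the $h_n$ remainder are routine given the uniform bounds $h_n^{(j)}(z)=O(|z|^{\mu_n-j})$ in $|\arg z|<\theta$, but the explicit match between the Mellin transforms of the individual asymptotic terms and the reconstruction of $\log\Delta_f$ requires careful accounting.
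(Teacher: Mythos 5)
The paper itself does not prove this proposition; it quotes it from \cite[Prop.\ 4.1]{FJS16} and attributes it to Voros. Measured against that cited argument, your overall strategy --- an integral representation of $\Z_f(s,z)$ against $d\log\Delta_f$, term-by-term continuation in $s$ using \eqref{defAE}, and evaluation of $\partial_s$ at $s=0$ --- is the right circle of ideas, but two steps in your sketch are genuine gaps, not routine omissions. First, your starting contour is not well defined: $f$ has infinitely many zeros tending to infinity, in directions that need not lie in the sector $|\arg w|<\theta$ where \eqref{defAE} is valid, and $(z-w)^{-s}$ has a branch cut along the ray $z+\RR_{\geq 0}$, so no closed curve ``encircling all the $y_k$'' exists. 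The standard repair, and what \cite{FJS16} actually does, is to differentiate three times rather than once: for $2<\Re(s)<3$ and $z-y\notin\RR^{-}$ one has $(z-y)^{-s}=\tfrac{2}{\Gamma(s)\Gamma(3-s)}\int_0^\infty t^{2-s}(z+t-y)^{-3}\,dt$, which converts $\Z_f(s,z)$ into a ray integral of $t^{2-s}$ against $(\log\Delta_f)'''(z+t)$ along $z+\RR_{>0}$ --- a ray that eventually lies in the sector of validity of \eqref{defAE}. This also cures a convergence problem you pass over: the separate integrals of $(z-w)^{-s}$ against the terms $w^j$ and $w^j\log w$ of the differentiated expansion have no common half-plane of convergence, so ``term by term'' requires either the third-derivative device or a split of the integral at a finite point with each piece continued separately.

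Second, and more seriously, the ``decisive bookkeeping'' is wrong as you state it. You claim the monomial $b_jw^j$ contributes to $\Z_f(s,z)$ a factor vanishing at $s=0$ with $s$-derivative $b_jz^j$, while the transcendental terms reproduce themselves in $-\partial_s\Z_f(s,z)|_{s=0}$. Since $\log\Delta_f(z)$ equals the transcendental part plus $b_2z^2+b_1z+b_0$, adding your two contributions gives $-\partial_s\Z_f(s,z)|_{s=0}=\log\Delta_f(z)-2(b_2z^2+b_1z+b_0)$, i.e.\ $D_f(z)=e^{-2(b_2z^2+b_1z+b_0)}\Delta_f(z)$, off by a factor of two in the exponent from \eqref{D(z)}. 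The correct accounting is that the pure monomials contribute \emph{nothing} to $\Z_f(s,z)$: in the third-derivative formulation they are annihilated before any Mellin transform is taken, and in a first-derivative formulation their regularized Mellin transforms vanish identically in $s$. The subtraction of $b_2z^2+b_1z+b_0$ in \eqref{D(z)} arises precisely because these terms are present in $\log\Delta_f(z)$ but invisible to the superzeta function, not because they reappear in its derivative at $s=0$. (A minor further point: the continuation generically also has poles at $s=\mu_k$, not only at $s=1,2$; this does not affect regularity at $s=0$, which is all the proposition needs.)
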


\subsection{Superzeta functions associated to $Z_+$ and $Z_-$}
Let $X_{\pm} = X_{Z_{\pm}},$ and for $z \in X_{\pm},$ denote by $\mathcal{Z}_{\pm}(s,z) :=\mathcal{Z}_{Z_{\pm}}(s,z)$ the superzeta functions of $Z_{\pm}.$

\begin{thm} \label{thm:cont}
For $z \in X_{\pm},$ the superzeta functions  $\mathcal{Z}_{\pm}(s,z)$ have meromorphic continuations to all of $s\in\CC$, regular at $s=0.$ Furthermore,
for $z\in X_+\cap X_-$
\beq \label{phi expression}
\phi(z)= (\pi)^{\tfrac{\csp}{2}}e^{c_1 z+c_2} \exp\left(-\frac{d}{ds}\left.\left( \Z_-(s,z) - \Z_+(s,z)\right)\right|_{s=0} \right).
\eeq
\end{thm}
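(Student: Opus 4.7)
The strategy is to apply Proposition~\ref{prop: Voros cont.} separately to the two order-two entire functions $Z_+$ and $Z_-$ (their entirety and order are a direct consequence of the construction in Section~\ref{complete zetas}), and then combine the two outputs using $\phi(z)=Z_-(z)/Z_+(z)$. Meromorphic continuation of $\mathcal{Z}_{\pm}(s,z)$ to all of $\CC$ is automatic once the Proposition is invoked: the defining series (\ref{Zeta1}) converges absolutely and so is holomorphic for $\Re(s)>2$, while Proposition~\ref{prop: Voros cont.} supplies a meromorphic continuation to $\Re(s)<2$, regular at $s=0$.

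The first task is to derive an asymptotic expansion of the form (\ref{defAE}) for $\log Z_+(s)$ as $\Re(s)\to\infty$. Starting from
\[
\log Z_+(s)=\log Z(s)-\log G_1(s)-\csp\log\Gamma(s-1/2),
\]
the bound (\ref{eqSelZetaBound}) forces $\log Z(s)$ and all its derivatives into the error term $h_n$. Expanding $\log G_1(s)$ via (\ref{def G_1}) writes it as a finite linear combination of $\log G(w+1)$ and $\log\Gamma(w)$ at linear shifts of $s$; inserting the Barnes asymptotic (\ref{asmBarnes}) and Stirling's formula (\ref{gammaExpan}) into each piece, and adding the Stirling contribution of $\csp\log\Gamma(s-1/2)$, produces an expansion of exactly the form (\ref{defAE}). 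Label the resulting coefficients $\widetilde a_i^+,b_i^+,a_k^+$.

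For $Z_-(s)=Z_+(s)\phi(s)$ we use $\log Z_-(s)=\log Z_+(s)+\log\phi(s)$, where by (\ref{eqPhiA})--(\ref{eqPhiB})
\[
\log\phi(s)=\frac{\csp}{2}\log\pi+\csp\bigl(\log\Gamma(s-1/2)-\log\Gamma(s)\bigr)+c_1 s+c_2+\log H(s).
\]
By (\ref{asmPhi}), $\log H(s)$ is exponentially small in $\Re(s)$ and may be absorbed into $h_n$, while Stirling gives $\log\Gamma(s-1/2)-\log\Gamma(s)=-\tfrac{1}{2}\log s+O(1/s)$. Hence $\log\phi$ leaves the $s^2\log s$, $s^2$ and $s\log s$ coefficients of the expansion unchanged, and produces precisely the shifts
\[
b_2^--b_2^+=0,\qquad b_1^--b_1^+=c_1,\qquad b_0^--b_0^+=c_2+\tfrac{\csp}{2}\log\pi.
\]
In particular, the expansion for $\log Z_-$ is again of the form (\ref{defAE}).

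Proposition~\ref{prop: Voros cont.} now delivers both meromorphic continuations, regularity at $s=0$, and the identities $Z_{\pm}(z)=\exp\bigl(b_2^{\pm}z^2+b_1^{\pm}z+b_0^{\pm}\bigr)D_{Z_{\pm}}(z)$. Dividing and inserting the coefficient identifications above,
\[
\phi(z)=\frac{Z_-(z)}{Z_+(z)}=\pi^{\csp/2}e^{c_1 z+c_2}\cdot\frac{D_{Z_-}(z)}{D_{Z_+}(z)},
\]
and rewriting $D_{Z_{\pm}}(z)$ as $\exp(-\tfrac{d}{ds}\mathcal{Z}_{\pm}(s,z)|_{s=0})$ gives (\ref{phi expression}). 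The main obstacle is the bookkeeping in the first step: one must assemble the Barnes and Gamma contributions through the rather elaborate product defining $G_1$. The crucial structural observation, however, is that only the \emph{differences} $b_i^--b_i^+$ enter the final identity, and these are controlled entirely by the clean asymptotic of $\log\phi$; the individual (unpleasant) coefficients of the $Z_\pm$ expansions cancel and never need to be computed.
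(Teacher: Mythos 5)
Your proposal is correct and follows essentially the same route as the paper: verify the hypotheses of Proposition~\ref{prop: Voros cont.} for $Z_{\pm}$ via the Stirling, Barnes, Selberg-zeta and Dirichlet-series asymptotics, observe that $\log\phi$ shifts only the coefficients $b_1$ and $b_0$ (by $c_1$ and $c_2+\tfrac{\csp}{2}\log\pi$) plus terms that do not appear in \eqref{D(z)}, and divide the two instances of \eqref{D(z)}. The explicit coefficient bookkeeping you record is exactly the observation the paper makes (it computes the same shift using Legendre duplication rather than your direct Stirling expansion of $\log\Gamma(s-1/2)-\log\Gamma(s)$, an immaterial difference).
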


\begin{proof}
We claim that $Z_{+}(s)$ and $Z_{-}(s)$ both are entire, order two functions which satisfy the hypothesis of Proposition~\ref{prop: Voros cont.}.
Indeed, the function $G_1(s)$ is a product of rescaled Barnes double Gamma functions, so by using  the asymptotic expansion \eqref{gammaExpan} of the
Gamma function, the expansion \eqref{asmBarnes} of the Barnes double Gamma function, the bound \eqref{eqSelZetaBound} for logarithm of the Selberg zeta function,
and the asymptotic expansion of the logarithm of the automorphic scattering matrix $\phi(s)=L(s)H(s)$, deduced from \eqref{eqPhiA} and \eqref{asmPhi},  we can obtain an asymptotic expansion of the form
\eqref{defAE} for both $Z_+$ and $Z_-$. We refer to the proof of \cite[Thm. 6.2]{FJS16} where similar computations are worked out in complete detail.
Thus, by Proposition~~\ref{prop: Voros cont.} 
both $\mathcal{Z}_{\pm}(s,z)$ have meromorphic continuations to all of $s\in\CC$ which are regular at $s=0.$

Recall that $Z_{-}(s) = \phi(s) Z_{+}(s).$ Hence from the asymptotic properties of $\log{\phi(s)}$, it follows that
$$
\log{Z_{-}(s)} = \log{Z_{+}(s)} + \frac{\csp}{2}\log{\pi} -\frac{\csp}{2}\log{s} + c_1s + c_2 +o(1)
\,\,\,\,\,\text{\rm as $s \rightarrow \infty$}.
$$
Here $c_1$ and $c_2$ are from \eqref{eqPhiA}, and we used the asymptotic expansion of $\log{\Gamma(s-\tfrac{1}{2})}$ which can be obtained
from \eqref{gammaExpan} and Legendre's duplication formula.
Since $\log{Z_{+}(s)}$ has an expansion of the form
$$\log{Z_{+}(s)} = \widetilde{a}_{2}s^{2}(\log s-\frac{3}{2}%
)+b_{2}s^{2}+\widetilde{a}_{1}s\left( \log s-1\right) +b_{1}s+\widetilde{a}%
_{0}\log s+b_{0}+\sum_{k=1}^{n-1}a_{k}s^{\mu _{k}} + h_n(s),$$
we conclude that
$$\log{Z_{-}(s)} = \widetilde{a}_{2}s^{2}(\log s-\frac{3}{2}%
)+b_{2}s^{2}+\widetilde{a}_{1}s\left( \log s-1\right) +(b_{1}+c_1)s+\widetilde{a'}%
_{0}\log s+(b_{0}+\frac{\csp}{2}\log{\pi} + c_2)+\sum_{k=1}^{n-1}a'_{k}s^{\mu _{k}} + g_n(z).$$
Note that of the leading terms only $\widetilde{a}_0,$ $b_1,$ and $b_0$ changed; however,
only the $b-$terms are explicitly present on the right hand side of \eqref{D(z)}.
Hence applying \eqref{D(z)} to both $Z_{-}(s),$ $Z_{+}(s),$ and recalling that $Z_{-}(s) = \phi(s) Z_{+}(s)$
gives us \eqref{phi expression}.

\end{proof}

\section{Proof of the main Theorem}

 As above, let $K$ denote the number of exceptional eigenvalues $\lambda_j$ of the Laplacian, let $s_j\in(1/2,1]$ be such that $s_j(1-s_j)=\lambda_j$ and let $m_j=m(\lambda_j)$ be the multiplicity of the eigenvalue $\lambda_j$, $j=1,...K$.

For any $j\in\{1,...,K\}$, $s_j$ is a zero of $Z_+(s)$ of multiplicity $m_j$ and $(1-s_j)\in[0,1/2)$ is a zero of $Z_+(s)$ of multiplicity $m_j-q(s_j)$ (where we put $q(s_j)=0$ in case when $s_j$ is not a pole of $\phi(s)$). Moreover, recall that $\rho_i>1/2$, $i=1,...N$ are real zeros of $\phi(s)$, counted according to their multiplicities and that the number of poles $\sigma_i \in (1/2,1]$ of $\phi(s)$ is $T\leq K$.

We define
$$
\Z_+^*(s,z):= \Z_+(s,z)- \frac{\mm}{(z-1/2)^s}-\sum_{j=1}^{K}\frac{m_j}{(z-s_j)^s} -\sum_{j=1}^{K}\frac{m_j - q(s_j)}{(z-(1-s_j))^s}- \sum_{i=1}^{N}\frac{1}{(z-(1-\rho_i))^s}
$$
and
$$
\Z_-^*(s,z):=\Z_-(s,z)- \frac{\mm}{(z-1/2)^s}-\sum_{j=1}^{K}\frac{m_j-q(s_j)}{(z-s_j)^s}    -\sum_{j=1}^{K}\frac{m_j}{(z-(1-s_j))^s}
-\sum_{i=1}^{N}\frac{1}{(z-\rho_i)^s}.
$$
From Section~\ref{complete zetas} and proof of Theorem \ref{thm:cont}, we see that, for any fixed  $z\in (X_+\cap X_-)\cup [1/2,1]$ the functions $\Z_\pm^*(s,z)$ are meromorphic in the entire $s-$plane, holomorphic at $s=0$. Hence, by analytic continuation for $z \in \CC,$  equation~\eqref{phi expression} becomes
\begin{multline} \label{phi at 1/2-start}
\phi(1/2) =  (\pi)^{\tfrac{\csp}{2}}e^{(c_1/2+c_2)} \cdot
\exp\left[-\frac{d}{ds} \left(\Z_-^*(s,\tfrac{1}{2}) - \Z_+^*(s,\tfrac{1}{2}) \right. \right. \\ + \left. \left. \left. \sum_{i=1}^{T}q(\sigma_i)\left(\frac{1}{(\sigma_i-\tfrac{1}{2})^s} - \frac{1}{(\tfrac{1}{2}-\sigma_i)^s}   \right) +\sum_{i=1}^{N}\left( \frac{1}{(\tfrac{1}{2}-\rho_i)^s} - \frac{1}{(\rho_i-\tfrac{1}{2})^s}  \right)  \right) \right|_{s=0} \right].
\end{multline}
Note that the support of $q(z)$ is on the set $\{\sigma_1, \sigma_2, \dots, \sigma_T \}.$
Using the principal branch of $\log(z),$ where $-\pi < \arg(z) \leq \pi,$ for $0 \neq z \in \RR,$ it is elementary that
$$\exp\left( \left. \frac{d}{ds} z^{-s} \right|_{s=0}  \right) = \frac{1}{z}. $$
Recalling the definition of $c_1,c_2$ from above, we can simplify \eqref{phi at 1/2-start} and write
\begin{multline} \label{phi1/2b}
\phi(1/2) =  (\pi)^{\tfrac{\csp}{2}}\frac{d(1)}{g_1}
\exp\left(- \frac{d}{ds} \left( \left. \Z_-^*(s,\tfrac{1}{2}) - \Z_+^*(s,\tfrac{1}{2})  \right) \right|_{s=0} \right) \prod_{i=1}^T \left( \frac{\sigma_i-\tfrac{1}{2}}{\tfrac{1}{2}-\sigma_i} \right) ^{q(\sigma_i)} \prod_{i=1}^N \left( \frac{\tfrac{1}{2}-\rho_i}{\rho_i - \tfrac{1}{2}} \right) \\ = (-1)^{P+N}\frac{d(1)}{g_1}(\pi)^{\tfrac{\csp}{2}}
\exp\left(- \frac{d}{ds} \left( \left. \Z_-^*(s,\tfrac{1}{2}) - \Z_+^*(s,\tfrac{1}{2})  \right) \right|_{s=0} \right)
\end{multline}

Next let us look at the difference $\Z_-^*(s,1/2)- \Z_+^*(s,1/2)$. For $\Re(s)>2$,
$$
\Z_-^*(s,1/2)= \sum_{\rho}\frac{1}{(\rho-1/2)^s}+ \sum_{j=1}^{\infty}\left(\frac{1}{(it_j)^s} + \frac{1}{(-it_j)^s} \right)
$$
where the first sum is taken over all zeros $\rho$ of the scattering determinant $\phi(s)$ with $\Re(\rho)>1/2$ and $\Im(\rho)\neq 0$
while the second sum is taken over all real $t_j>0$ such that $\lambda_j=1/4 + t_j^2 >1/4$ are discrete eigenvalues of $\Delta$.
Since the non-real zeros of $Z_-(s)$ come in complex conjugate pairs, for $\Re(s)>2$  we may write
$$
\sum_{\rho: \Im(\rho)>0}\left(\frac{1}{(\rho-1/2)^s} + \frac{1}{(\overline{\rho}-1/2)^s} \right) + \sum_{j=1}^{\infty}\left(\frac{1}{(it_j)^s} + \frac{1}{(-it_j)^s} \right).
$$
Elementary computations show that for real $s>2$, $\Z_-^*(s,1/2)$ is real. By uniqueness of analytic continuation, we deduce that $$\frac{d}{ds}\Z_-^*(s,1/2)|_{s=0}$$ is also real.
Analogously, since the non-real zeros of $Z_+(s)$  also come in complex-conjugate pairs, we deduce that  $$\frac{d}{ds}\Z_+^*(s,1/2)|_{s=0}$$ is also real. Hence
$$
\exp\left(-\frac{d}{ds}\left(\Z_-^*(s,1/2)- \Z_+^*(s,1/2) \right)|_{s=0} \right)= e^{\alpha}>0,
$$
for some $\alpha \in \RR$.  Substituting into \eqref{phi1/2b}, we get that
\begin{equation}\label{phivalue}
\phi(1/2)= (-1)^{N+P} \pi^{\csp/2} \frac{d(1)}{g_1} e^{\alpha}.
\end{equation}
Since we know that $\phi(1/2)^{2} = 1$, it remains to determine the sign of the above expression.
However, since $g_{1}>0$ and $e^{\alpha}$ is positive, we conclude that
$\phi(1/2)=(-1)^{N+P} \cdot \mathrm{sgn}(d(1))$, which completes the proof of the main theorem.

By taking the absolute values of both sides of (\ref{phivalue}), we obtain the following corollary.

\begin{cor} With notation as above, we have that
$$\exp\left(-\frac{d}{ds}\left(\Z_-^*(s,1/2)- \Z_+^*(s,1/2) \right)|_{s=0} \right) = \frac{g_1}{\pi^{\csp/2} |d(1)|}.$$
\end{cor}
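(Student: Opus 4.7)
The plan is extremely short, since the corollary is a direct algebraic consequence of the identity \eqref{phivalue} established during the proof of Theorem~\ref{main thm}. First I would recall that in that proof it was shown that the quantity
\[
e^{\alpha} := \exp\left(-\frac{d}{ds}\left(\Z_-^*(s,1/2)- \Z_+^*(s,1/2) \right)\Big|_{s=0} \right)
\]
is a positive real number, because the non-real zeros of $Z_\pm$ occur in complex-conjugate pairs, which forces $\tfrac{d}{ds}\Z_\pm^*(s,1/2)|_{s=0} \in \RR$. With that in hand, identity \eqref{phivalue} reads
\[
\phi(1/2) = (-1)^{N+P}\,\pi^{\csp/2}\,\frac{d(1)}{g_1}\,e^{\alpha}.
\]

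Next I would simply take absolute values on both sides. Since $(-1)^{N+P}$ has modulus one, $\pi^{\csp/2}$ and $g_1$ are positive, $e^{\alpha}>0$, and $d(1)\in\RR\setminus\{0\}$, this yields
\[
|\phi(1/2)| \;=\; \pi^{\csp/2}\,\frac{|d(1)|}{g_1}\,e^{\alpha}.
\]
The functional equation \eqref{functeq phi} forces $\phi(1/2)^2 = 1$, and $\phi(1/2)$ is real by Theorem~\ref{main thm}, so $|\phi(1/2)| = 1$. Solving for $e^{\alpha}$ immediately gives
\[
e^{\alpha} \;=\; \frac{g_1}{\pi^{\csp/2}\,|d(1)|},
\]
which, upon substituting the definition of $\alpha$, is exactly the statement of the corollary.

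There is no substantive obstacle: all of the genuine work (the meromorphic continuation of the superzeta functions, the Hadamard-product/Voros machinery leading to \eqref{phi expression}, and the reality argument producing $e^{\alpha}>0$) was already carried out in proving Theorem~\ref{main thm}. The one point to be careful about is to invoke the reality and positivity of $e^{\alpha}$ \emph{before} taking absolute values, so that the sign ambiguity from $d(1)$ cleanly separates from the exponential factor and cancels against $|\phi(1/2)|=1$; otherwise the identification of $e^{\alpha}$ with a specific positive quantity would not be justified.
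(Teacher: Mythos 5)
Your proposal is correct and matches the paper's own argument, which likewise obtains the corollary by taking absolute values of both sides of \eqref{phivalue} and using $|\phi(1/2)|=1$ together with the positivity of $e^{\alpha}$. Nothing further is needed.
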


\section{Examples}

\begin{example}\rm
In the case when $\Gamma$ is the modular group $\mathrm{PSL}(2,\mathbb{Z})$, the scattering determinant is given by
$$
\phi(s)=\sqrt{\pi}\frac{\Gamma(s-1/2)}{\Gamma(s)} \frac{\zeta(2s-1)}{\zeta(2s)},
$$
where $\zeta(s)$ is the Riemann zeta function, hence
$$
\phi(1/2)= \frac{\sqrt{\pi}}{\Gamma(1/2)} \cdot\zeta(0) \cdot \lim_{s\to 1/2} \frac{\Gamma(s+1/2)}{(s-1/2) \zeta(2s)} = -\frac{1}{2} \cdot \frac{1}{1/2}=-1.
$$

This agrees with Theorem \ref{main thm} since in this case, $P=1$ (there is only one residual eigenvalue of multiplicity one) and there are no real zeros of $\phi(s)$ which are bigger than $1/2$, i.e. $N=0$.
\end{example}

\begin{example}\rm
Let $N$ be a square-free number with $r\geq 1$ distinct prime factors $p_1,...,p_r$.
(Note:  This $N$ is not the same $N$ as in the statement of the main theorem.)
When $\Gamma = \Gamma_0(N)$ is the congruence group, then the scattering matrix $\phi_{N,0}(s)$ is given by
$$
\phi_{N,0}(s)=\left[ \sqrt{\pi}\frac{\Gamma(s-1/2)}{\Gamma(s)} \frac{\zeta(2s-1)}{\zeta(2s)} \right]^{2^r} \prod _{p\mid N} \left( \frac{1-p^{2-2s}}{1-p^{2s}}\right)^{2^{r-1}},
$$
see \cite{Hejhal83}, formula (4.7) on p. 538, hence, obviously $\phi(1/2)=1$.

On the other hand, the first factor in the above equation has a pole at $s=1$ of order $2^r$, while the second factor has a zero at $s=1$ of order $r2^{r-1}$ and there are no other zeros or poles of $\phi(s)$ that belong to the interval $(1/2, +\infty)$. Therefore, for a prime level $N=p$, meaning when $r=1$,
one has $(-1)^{N+P}=-1$ indicating that one has $\mathrm{sgn} (d(1))=-1$. Indeed, if one expands the function
$$
H(s)=\left[\frac{\zeta(2s-1)}{\zeta(2s)} \right]^{2} \left( \frac{1-p^{2-2s}}{1-p^{2s}}\right)
$$
into Dirichlet series, it is easy to see that, actually, $d(1)=-1$.
In case when $N$ possesses two or more distinct prime factors, the number of real zeros and poles of the automorphic
scattering determinant greater than $1/2$ is even, and $d(1)$ is evidentally positive. Therefore, our main result is verified in this case.
\end{example}

\begin{example} \rm Let $\{p_{i}\}$, with $i=1,\ldots,r$, be a set of distinct primes and set
$N=p_1\cdots p_r$.  The subset of $\SL(2,\RR)$, defined by
\begin{align*}
  \Gamma_0(N)^+:=\left\{ e^{-1/2}\begin{pmatrix}a&b\\c&d\end{pmatrix}\in
    \SL(2,\RR): \, ad-bc=e, \,\, a,b,c,d,e\in\Z, \,\, e\mid N,\ e\mid a,
    \ e\mid d,\ N\mid c \right\}
\end{align*}
is an arithmetic subgroup of $\SL(2,\RR)$.  In effect, $\Gamma_{0}(N)^{+}$ is obtained by
adding the Atkin-Lehner involutions to $\Gamma_{0}(N)$.  In \cite{JST14}, it is shown that
the automorphic scattering determinant $\phi_{N}$ associated to $\Gamma_{0}(N)^{+}$ is
  \begin{align*}\label{DefScattViaXi}
    \phi_N(s)=\frac{s}{s-1}\frac{\xi(2s-1)}{\xi(2s)}\cdot \frac{1}{N^s}\cdot\prod_{j=1}^r\frac{p_j^s+p_j}{p_j^s+1},
  \end{align*}
 where $\xi(s):=\frac{1}{2}s(s-1)\pi^{-s/2}\Gamma(s/2)\zeta(s)$ is the completed Riemann zeta function.
We allow $N=1$ in which case $\Gamma_{0}(N)^{+} = \PSL(2,\ZZ)$.
Immediately, one can compute that $\phi_{N}(1/2) = -1$.  For this example, $\phi_{N}(s)$ possesses only one pole at $s=1$ and
no zeros greater than $1/2$, and $\mathrm{sgd}(d(1)) = 1$,  thus verifying our main theorem.

\end{example}

\vspace{5mm}

\noindent
Joshua S. Friedman \\
Department of Mathematics and Science \\
\textsc{United States Merchant Marine Academy} \\
300 Steamboat Road \\
Kings Point, NY 11024 \\
U.S.A. \\
e-mail: FriedmanJ@usmma.edu, joshua@math.sunysb.edu, CrownEagle@gmail.com

\vspace{5mm}
\noindent
Jay Jorgenson \\
Department of Mathematics \\
The City College of New York \\
Convent Avenue at 138th Street \\
New York, NY 10031
U.S.A. \\
e-mail: jjorgenson@mindspring.com

\vspace{5mm}

\noindent
Lejla Smajlovi\'c \\
Department of Mathematics \\
University of Sarajevo\\
Zmaja od Bosne 35, 71 000 Sarajevo\\
Bosnia and Herzegovina\\
e-mail: lejlas@pmf.unsa.ba
\end{document}